\newcommand{\bigOp}[1]{O_\pr \left(#1\right)}
\newcommand\numberthis{\addtocounter{equation}{1}\tag{\theequation}}
\begin{document}

\title{Upper bounds for number of removed edges in the Erased Configuration Model}
\author{Pim van der Hoorn\inst{1} \and Nelly Litvak\inst{1}}
\institute{
	University of Twente, Department of Electrical Engineering, Mathematics and Computer Science \\
	\email{w.l.f.vanderhoorn@utwente.nl}
}

\maketitle

\begin{abstract}
	Models for generating simple graphs are important in the study of real-world complex networks. A
	well established example of such a model is the erased configuration model, where each node 
	receives a number of half-edges that are connected to half-edges of other nodes at random, and 
	then self-loops are removed and multiple edges are concatenated to make the graph simple. 
	Although asymptotic results	for many properties of this model, such as the limiting degree 
	distribution, are known, the exact speed of convergence in terms of the graph sizes remains 
	an open question. We provide a first answer	by analyzing the size dependence of the average 
	number of removed edges in the erased configuration model. By combining known upper bounds with a
	Tauberian Theorem we obtain upper bounds for the number of removed edges, in terms of the size of
	the graph. Remarkably, when the degree distribution follows a power-law, we observe three scaling 
	regimes, depending on the power law exponent. Our results provide a strong theoretical basis for 
	evaluating finite-size effects in networks.
\end{abstract}

\section{Introduction}

The use of complex networks to model large systems has proven to be a powerful tool in recent 
years. Mathematical and empirical analysis of structural properties of such networks, such as graph
distances, centralities, and degree-degree correlations, have received vast attention in recent 
literature. A common approach for understanding these properties on real-world networks, is to 
compare them to those of other networks which have the same basic characteristics as the network 
under consideration, for instance the distribution of the degrees. Such test networks are usually 
created using random graph models. An important property of real-world networks is that they are 
usually simple, i.e. there is at most one edge between any two nodes and there are no self-loops. 
Hence, random graph models that produce simple graphs are of primary interest from the application
point of view.

One well established model for generating a graph with given degree distribution is the 
configuration model~\cite{Bollobas1980,Molloy1995,Newman2001}, which has been studied extensively in 
the literature~\cite{Britton2006,VanDerHofstad2007,Hoorn2014,Hoorn2015}. In this model, 
each node first receives a certain number of half-edges, or stubs, and then the stubs are connected
to each other at random. Obviously, multiple edges and self-loops may appear during the random 
wiring process. It is well-known that when the degree distribution has finite variance, the graph will be 
simple with positive probability, so a simple graph can be obtained by repeatedly applying the 
model until the resulting graph is simple. However, when the variance of the degrees is infinite 
the resulting graph will, with high probability, not be simple. To remedy this, one can remove 
self-loops and concatenate the multiple edges to make the graph simple. This version is know as the
erased configuration model. Although removal of edges impacts the degree distribution, it has been 
shown that asymptotically the degree distribution is unchanged. For a thorough systematic treatment 
of these results we refer the reader to~\cite{VanDerHofstad2007}.

An important feature of the configuration model is that, conditioned on the graph being simple, it 
samples a graph uniformly from among all simple graphs with the specified degree distribution. 
This, in combination with the neutral wiring in the configuration model, makes it a crucial model for 
studying the effects of degree distributions on the structural properties of the networks, such as,
for instance, graph distances~\cite{Esker2005,Hofstad2005,Hofstad2005a,Molloy1998} and epidemic 
spread~\cite{Andersson1998,Ferreira2012,Lee2013}. 

We note that there are several different methods for generating simple graphs, sampled uniformly
from the set of all simple graphs with a given degree sequence. A large class of such models use
Markov-Chain Monte Carlo methods for sampling graphs uniformly from among all graphs with a 
given set of constraints, such as the degree sequence. These algorithms use so-called edge swap or 
switching steps,\cite{Artzy-Randrup2005,Maslov2002,Tabourier2011}, each time a pair of edges is 
sampled and swapped, if this is allowed. The main problem with this method are the limited 
theoretical results on the mixing times, in~\cite{Cooper2007} mixing times are analyzed, but only 
for regular graphs. Other methods are, for instance, the sequential algorithms proposed in 
\cite{Blitzstein2011sequential,DelGenio2010} which have complexity $O(E N^2)$ and $O(E N)$, 
respectively, where $N$ is the size of the graph and $E$ denotes the number of edges. The erased
configuration model however,is well studied, with strong theoretical results and is easy to 
implement.

In a recent study \cite{Schlauch2015}, authors compare several methods, including the above 
mentioned Markov-Chain Monte Carlo methods, for creating test graphs for the analysis of
structural properties of networks. The authors found that the number of removed edges did not 
impact the degree sequence in any significant way. However, several other measures on the graph, 
for instance average neighbor degree, did seem to be altered by the removal of self-loops and 
double-edges. This emphasizes the fact that asymptotic results alone are not sufficient. The 
analysis of networks requires a more detailed understanding of finite-size effects in simple random
graphs. In particular, it is important to obtain a more precise characterization for 
dependence of the number of erased edges on the graph size, and their impact on other characteristics of the graph.

In our recent work~\cite{Hoorn2015} we analyzed the average number of removed edges in order to 
evaluate the degree-degree correlations in the directed version of the erased configuration model. 
We used insights obtained from several limit theorems to derive the scaling in terms of the graph 
size. Here we make these rigorous by proving three upper bounds for the average number of removed
edges in the undirected erased configuration model with regularly varying degree distribution. We 
start in Section 2 with the formal description of the model. Our main result is stated in Section 3
and the proofs are provided in Section 4.

\section{Erased Configuration Model}

The Erased Configuration Model (ECM) is an alteration of the Configuration Model (CM), which
is a random graph model for generating graphs of size $n$ with either prescribed degree sequence or 
degree distribution. Given a degree sequence ${\bf D}_n$ such that $\sum_{i = 1}^n D_i$ is 
even, the degrees of each node are represented as stubs and the graph is constructed by randomly 
pairing stubs to form edges. This will create a graph with the given degree sequence. 

In another version of the model, degrees are sampled independently from a given distribution, an 
additional stub is added to the last node if the sum of degrees is odd, and the stubs are connected
as in the case with given degrees. The empirical degree distribution of the resulting graph will 
then converge to the distribution from which the degrees were sampled as the graph size goes to 
infinity, see for instance~\cite{VanDerHofstad2007}. 

When the degree distribution has finite variance, the probability of creating a simple graph with 
the CM is bounded away from zero. Hence, by repeating the model, one will obtain a simple graph 
after a finite number of attempts. This construction is called the Repeated Configuration Model 
(RCM). It has been shown that the RCM samples graphs uniformly from among all simple graphs with 
the given degree distribution, see Proposition 7.13 in \cite{VanDerHofstad2007}. 

When the degrees have infinite variance the probability of generating a simple graph with the CM
converges to zero as the graph size increases. In this case the ECM can be used, where after all 
stubs are paired, multiple edges are merged and self-loops are removed. This model is 
computationally far less expensive than the RCM since the pairing only needs to be done once 
while in the other case the number of attempts increases as the variance of the degree distribution 
grows. The trade-off is that the ECM removes edges, altering the degree sequence and hence the 
empirical degree distribution. Nevertheless it was proven, see \cite{VanDerHofstad2007}, that the 
empirical degree distribution for the ECM still converges to the original one as $n \to \infty$.

For our analysis we shall consider graphs of size $n$ generated by the ECM, where the degrees are 
sampled at random from a regularly varying distribution. We recall that $X$ is said to have a 
regularly varying distribution with finite mean if
\begin{equation}
	\Prob{X > k} = L(k) k^{-\gamma} \quad \text{with } \gamma > 1,
\label{eq:degree_distribution}
\end{equation}
where $L$ is a slowly varying function, i.e. $\lim_{x \to \infty} L(tx)/L(x) = 1$ for all $t > 0$. 
The parameter $\gamma$ is called the exponent of the distribution.  

For $n \in \N$ we consider the degree sequence ${\bf D}_n$ as a sequence of i.i.d. samples from 
distribution~\eqref{eq:degree_distribution}, let $\mu = \Exp{D}$ and denote by $L_n = 
\sum_{i = 1}^n D_i$ the sum of the degrees. Formally we need $L_n$ to be even in order to have a 
graphical sequence, in which case $L_n/2$ is the number of edges. This can be achieved by 
increasing the degree of the last node $D_n$ by one if the sum is odd. This alteration adds a term 
uniformly bounded by one which does not influence the analysis. Therefore we can omit this and 
treat the degree sequence ${\bf D}_n$ as an i.i.d. sequence. 

For the analysis we denote by $E_{ij}$ the number of edges between two nodes, $1 \le i,j \le n$, 
created by the CM and by $E_{ij}^c$ the number of edges between the two nodes that where removed by
the ECM. Furthermore,  we let $\mathbb{P}_n$ and $\mathbb{E}_n$ be, respectively, the probability 
and expectation conditioned on the degree sequence ${\bf D}_n$.

\section{Main result}

The main result of this paper is concerned with the scaling of the average number of erased edges
in the ECM. It was proven in~\cite{Hoorn2014} that 
\begin{equation}\label{eq:erased_edges_convergence}
	\frac{1}{L_n} \sum_{i,j} \Expn{E_{ij}^c} \plim 0 \quad \text{as } n \to \infty,
\end{equation} 
where $\plim$ denotes convergence in probability. This result states that the average number of 
removed edges converges to zero as the graph size grows, which is in agreement with the convergence 
in probability of the empirical degree distribution to the original one. However, until now there
have not been any results on the speed of convergence in~\eqref{eq:erased_edges_convergence}. In 
this section we will state our result, which establishes upper bound on the scaling of the average
number of erased edges.

To make our statement rigorous we first need to define what we mean by scaling for a 
random variable.

\begin{definition}\label{def:probabilistic_bigO}
	Let $(X_n)_{n \in \N}$ be sequences of random variables and let 
	$\rho \in \R$. Then we define
	\[
		X_n = \bigOp{n^\rho} \iff \text{ for all } \varepsilon > 0 \quad 
		n^{-\rho - \varepsilon} X_n \plim 0, \quad \text{as } n \to \infty.
	\]
\end{definition}

We are now ready to state the main result on the scaling of the average number of erased edges
in the ECM

\begin{theorem}\label{thm:main_result}
	Let $G_n$ be a graph generated by the ECM with degree distribution
	\eqref{eq:degree_distribution}, let $L_n$ be the sum of the degrees and denote by $E_{ij}^c$ the 
	number of removed edges from $i$ to $j$. Then
	\begin{equation}
		\frac{1}{L_n} \sum_{i,j = 1}^n \Expn{E_{ij}^c} = 
		\begin{cases}
			\bigOp{n^{\frac{1}{\gamma} - 1}} &\mbox{ if } 1 < \gamma \le \frac{3}{2}, \\
			\bigOp{n^{\frac{4}{\gamma} - 3}} &\mbox{ if } \frac{3}{2} < \gamma \le 2,\\
			\bigOp{n^{-1}} &\mbox{ if } \gamma > 2.
		\end{cases}
	\label{eq:upper_bounds_erased_edges}
	\end{equation}
\end{theorem}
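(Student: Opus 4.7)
The plan is to combine two standard combinatorial upper bounds on $\Expn{E_{ij}^c}$ in the configuration model with Karamata's theorem, which plays the role of the Tauberian step converting degree-moment quantities into scalings in $n$ under the regularly varying assumption \eqref{eq:degree_distribution}.

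First, split $\sum_{i,j}\Expn{E_{ij}^c}$ into self-loops ($i=j$) and multi-edges ($i\ne j$). For self-loops, each unordered pair of stubs at node $i$ is matched with probability at most $1/(L_n-1)$, giving $\Expn{E_{ii}^c} \le D_i(D_i-1)/(L_n-1)$ and hence a diagonal contribution of order $\sum_i D_i^2/L_n$. For the off-diagonal terms two bounds are available: the quadratic bound $\Expn{E_{ij}^c} \le C\,D_i^2 D_j^2 / L_n^2$, coming from $(E_{ij}-1)^+ \le \binom{E_{ij}}{2}$ together with the closed-form expression for $\Expn{\binom{E_{ij}}{2}}$, and the linear bound $\Expn{E_{ij}^c} \le \Expn{E_{ij}} \le C\,D_i D_j / L_n$. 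Using the quadratic bound throughout yields
\[
  \frac{1}{L_n}\sum_{i,j}\Expn{E_{ij}^c} \;\le\; C_1\,\frac{\sum_i D_i^2}{L_n^2} \;+\; C_2\,\frac{\bigl(\sum_i D_i^2\bigr)^2}{L_n^3}.
\]

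Next comes the Tauberian step. Since $\mu<\infty$, the law of large numbers gives $L_n=\bigOp{n}$. Karamata's theorem applied to \eqref{eq:degree_distribution} yields $\sum_i D_i^2 = \bigOp{n}$ when $\gamma>2$, and $\sum_i D_i^2 = \bigOp{n^{2/\gamma}}$ when $\gamma<2$ (since then $\Exp{D^2\,\mathbbm{1}\{D\le K\}} \sim K^{2-\gamma}L(K)$, combined with $\max_i D_i = \bigOp{n^{1/\gamma}}$). Substituting into the display immediately recovers the third case $\bigOp{n^{-1}}$ (both terms are of that order when $\gamma>2$) and the second case $\bigOp{n^{4/\gamma-3}}$ when $3/2<\gamma\le 2$ (where the off-diagonal term dominates).

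The regime $1<\gamma\le 3/2$ is the main obstacle: there the quadratic bound alone only gives $\bigOp{n^{4/\gamma-3}}$, which is slower than the target $\bigOp{n^{1/\gamma-1}}$, so one must exploit the linear bound for the heaviest pairs. The plan is to split the off-diagonal sum according to whether $\lambda_{ij}:=D_iD_j/L_n$ exceeds $1$, apply the $\lambda_{ij}^2$ estimate on the small set and the $\lambda_{ij}$ estimate on the large set, and evaluate both halves via Karamata's theorem applied to truncated product moments such as $\Exp{D_1 D_2\,\mathbbm{1}\{D_1 D_2>L_n\}}$. The delicate point is that $\Exp{D^\gamma}=\infty$, so the inner Karamata integrations have to be carried out with a truncation at a level of order $L_n$; balancing the two heavy-tailed factors in the product is what ultimately produces the rate $n^{1/\gamma-1}$.
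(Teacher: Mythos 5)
Your handling of the regimes $\gamma>3/2$ coincides with the paper's: the self-loop and multiple-edge moment bounds you re-derive are exactly Proposition~\ref{prop:first_upper_bound}, and the scalings $\sum_i D_i^2=\bigOp{n^{2/\gamma}}$ (for $\gamma\le 2$) and $\bigOp{n}$ (for $\gamma>2$) are what the paper extracts from the Stable Law CLT; your Karamata/truncation derivation of these is an acceptable substitute. For the crucial regime $1<\gamma\le 3/2$ your route is genuinely different. The paper writes $E_{ij}^c=E_{ij}-\mathbbm{1}_{\{E_{ij}>0\}}$, reduces everything to $\frac{n^2}{L_n}\bigl(\frac{1}{n^2}\sum_{i,j}\frac{D_iD_j}{L_n}-1+\frac{1}{n^2}\sum_{i,j}e^{-D_iD_j/L_n}\bigr)$ via a bound on $\Probn{E_{ij}=0}$, and invokes the Tauberian Theorem~\ref{thm:tauberian_theorem} for the functional $x\mapsto x/t-1+e^{-x/t}$ applied to the regularly varying product $D_1D_2$; you instead use the pointwise bound $\Expn{(E_{ij}-1)^+}\le\min\{\Expn{E_{ij}},\Expn{\binom{E_{ij}}{2}}\}\le C\min\{\lambda_{ij},\lambda_{ij}^2\}$ and integrate by Karamata. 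Both arguments rest on the same key fact, namely that $D_1D_2$ is regularly varying with the same index $\gamma$.

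Two caveats. First, your closing assertion that the split at $\lambda_{ij}=1$ ``produces the rate $n^{1/\gamma-1}$'' is not substantiated and is in fact not what your method gives. Karamata yields $\Exp{D_1D_2\,\mathbbm{1}\{D_1D_2>t\}}\sim\frac{\gamma}{\gamma-1}\,t\,\Prob{D_1D_2>t}$ and $\Exp{(D_1D_2)^2\,\mathbbm{1}\{D_1D_2\le t\}}\sim\frac{\gamma}{2-\gamma}\,t^2\,\Prob{D_1D_2>t}$, so with $t$ of order $n$ both halves of your split contribute $n^2\cdot n^{-\gamma}$ (up to slowly varying factors) to $\sum_{i\ne j}\min\{\lambda_{ij},\lambda_{ij}^2\}$; after dividing by $L_n$ you obtain $\bigOp{n^{1-\gamma}}$. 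Equivalently, and without any truncation, bound $\min\{\lambda,\lambda^2\}\le\lambda^{\alpha}$ for $\alpha\in(1,\gamma)$ and apply the law of large numbers to $\sum_i D_i^{\alpha}$. Since $1-\gamma\le\frac{1}{\gamma}-1$ this still implies the stated theorem --- indeed it is a strictly stronger bound; in the paper the exponent $\frac{1}{\gamma}-1$ arises only from the error terms \eqref{eq:degree_estimation_linear}--\eqref{eq:degree_estimation_exponential} of replacing $L_n$ by $\mu n$, not from the erased-edge count itself, and your decomposition avoids that replacement entirely. Second, you must convert the deterministic truncated moments into statements about the random sums $\sum_{i\ne j}(D_iD_j)^{\beta}\mathbbm{1}\{\cdots\}$ with the random threshold $L_n$: Markov's inequality on the nonnegative summands, combined with restricting to the high-probability event $L_n\ge\mu n/2$, suffices, and is the same device the paper uses for its term \eqref{eq:tauberian_estimation}. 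With these two points made explicit your proof is complete.
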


The proof of Theorem~\ref{thm:main_result} is given in the next section. The strategy of the proof 
is to establish two upper bounds for $\sum_{i,j = 1}^n \Expn{E_{ij}^c}/L_n$ for the case $1 < 
\gamma \le 2$, each of which scales as one of the first two terms from
\eqref{eq:upper_bounds_erased_edges}. Then it remains to observe that the term $n^{1/\gamma - 1}$ 
dominates $n^{4/\gamma - 3}$ when $1 < \gamma \le 3/2$ while the latter one dominates when 
$3/2 < \gamma < 2$. In addition, we prove the $n^{-1}$ scaling for $\gamma > 2$.

Theorem~\ref{thm:main_result} gives several insights into the behavior of the ECM.  First, consider
the case when the degrees have finite variance ($\gamma > 2$). Equation
\eqref{eq:upper_bounds_erased_edges} tells us that in that case the ECM will erase only a finite, 
in $n$, number of edges. For large $n$, this alters the degree sequence in a negligible way. We 
then gain the advantage that we need to perform the random wiring only once. In contrast, the RCM 
requires multiple attempts before a simple graph is produced. This will be a problem, especially as
$\gamma$ approaches $2$. 

An even more interesting phenomenon established by Theorem~\ref{thm:main_result} is the remarkable 
change in the scaling at $\gamma = 3/2$. Figure~\ref{fig:erased_edges_scaling} shows the exponent 
in the scaling term in \eqref{eq:upper_bounds_erased_edges} as a function of $\gamma$.
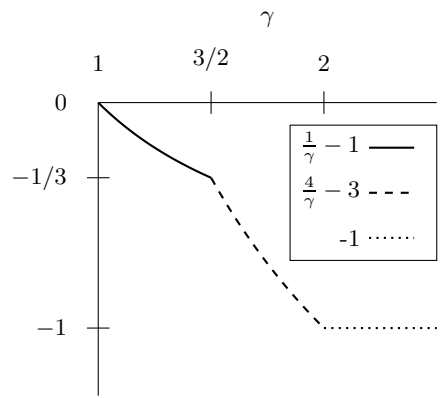
\begin{figure}%
	\centering
	\begin{tikzpicture}[scale=3]
	
	\draw (1,0) -- (2.5,0);
	\draw (1,0) -- (1,-1.3);
	
	\draw (1,0.1) node[above] {$1$};
	\draw (1.5,0.1) node[above] {$3/2$};
	\draw (1.5,-0.05) -- (1.5,0.05);
	\draw (2,0.1) node[above] {$2$};
	\draw (2,-0.05) -- (2,0.05);
	
	\draw (0.9,0) node[left] {$0$};
	\draw (0.9,-0.3333) node[left] {$-1/3$};
	\draw (0.95,-0.3333) -- (1.05,-0.3333);
	\draw (0.9,-1) node[left] {$-1$};
	\draw (0.95,-1) -- (1.05,-1);
	
	\draw (1.75,0.3) node[above] {$\gamma$};
	
	\draw[black,thick,domain=1:1.5] plot (\x, {(1/\x) - 1});
	\draw[black,dashed,thick,domain=1.5:2] plot (\x, {(4/\x) - 3});
	\draw[black,dotted,thick] (2,-1) -- (2.5,-1);
	
	\draw (1.85,-0.1) -- (2.5,-0.1) -- (2.5,-0.7) -- (1.85,-0.7) -- (1.85,-0.1);
	\draw (2.2,-0.2) node[left] {$\frac{1}{\gamma} - 1$};
	\draw[black,thick] (2.2,-0.2) -- (2.4,-0.2);
	\draw (2.2,-0.4) node[left] {$\frac{4}{\gamma} - 3$};
	\draw[black,dashed,thick] (2.2,-0.4) -- (2.4,-0.4);
	\draw (2.2,-0.6) node[left] {-1};
	\draw[black,dotted,thick] (2.2,-0.6) -- (2.4,-0.6);
	
\end{tikzpicture}
\caption{The scaling exponent of the average number of erased edges, as a function of $\gamma$.}%
\label{fig:erased_edges_scaling}%
\end{figure}
Notice that for small values of $\gamma$, the fraction of erased edges decreases quite slowly with 
$n$. For example, when $\gamma=1.1$ and $n=10^6$ then $n^{1/\gamma} \approx 284803$. Hence, a 
significant fraction of edges will be removed, so we can expect notable finite size effects even in
large networks. However, when $\gamma\ge 1.5$ the finite size effects are already very small and 
decrease more rapidly with $\gamma$.

It will be seen from the proofs in the next section that the upper bounds  for $\gamma>3/2$ in 
Theorem~\ref{thm:main_result} follow readily from the literature. Our main contribution is in the 
upper bound for $1 < \gamma<3/2$, which corresponds to many real-world networks. The proof uses a 
Central Limit Theorem and a Tauberian Theorem for regularly varying random variables. Note that 
when $1 < \gamma < 3/2$ the upper bound $n^{4/\gamma-3}$ is not at all tight and even increasing 
in $n$ for $\gamma<4/3$.

\section{Upper bounds for erased edges}\label{sec:upper_bounds}

Throughout this section we will use the Central Limit Theorem for regularly varying random 
variables also called the Stable Law CLT, see \cite{Whitt2002} Theorem 4.5.1. We summarize it 
below, letting $\dlim$ denote convergence in distribution, in the setting of non-negative 
regularly varying random variables.

\begin{theorem}[Stable Law CLT~\cite{Whitt2002}]\label{thm:stable_clt}
	Let $\{X_i: i \ge 1\}$ be an i.i.d. sequence of non-negative random variables with 
	distribution~\eqref{eq:degree_distribution} and $0 < \gamma < 2$. Then there exists a slowly 
	varying function $L_0$,	different from $L$, such that
	\[
		\frac{\sum_{i = 1}^n D_i - m_n}{L_0(n)n^{\frac{1}{\gamma}}} 
		\dlim \mathcal{S}_{\gamma},
	\]
	where $S_{\gamma}$ is a stable random variable and
	\begin{equation*}
		m_n = \begin{cases}
			0 &\mbox{if } 0 < \gamma < 1 \\
			n^{2}\Exp{\sin\left(\frac{X}{L_0(n) n}\right)} 
				&\mbox{if } \gamma = 1 \\
			n\Exp{X} &\mbox{if } 1 < \gamma < 2.
		\end{cases}
	\end{equation*}
\end{theorem}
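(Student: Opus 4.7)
The plan is to follow the classical characteristic-function route, adapting the Gnedenko--Kolmogorov argument to the setting of regularly varying tails. Setting $a_n = L_0(n) n^{1/\gamma}$ with $L_0$ to be determined, and writing $\phi(t) = \Exp{e^{itX}}$ for the common characteristic function of the $D_i$, independence gives
\[
    \Exp{\exp\!\left(\frac{it\bigl(\sum_{i=1}^n D_i - m_n\bigr)}{a_n}\right)} = \phi(t/a_n)^n \, e^{-it m_n / a_n}.
\]
By L\'evy's continuity theorem it suffices to show that this converges, for every fixed $t$, to the characteristic function of a stable law of index $\gamma$.

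The core of the argument is a Tauberian-type expansion of $1 - \phi(u)$ as $u \to 0$ in terms of the tail $\Prob{X > k} = L(k) k^{-\gamma}$. Writing $1 - \phi(u) = \int_0^\infty (1 - e^{iux}) \, dF(x)$, integrating by parts against $\overline{F}$, and applying Karamata's theorem to the regularly varying integrand, one obtains an asymptotic of the shape $1 - \phi(u) \sim c_\gamma L(1/|u|) |u|^\gamma + i\mu u \, \mathbf{1}_{\{\gamma > 1\}}$, where the complex constant $c_\gamma$ carries the rotation that determines the skewness of the limit. Choosing $a_n$ via the asymptotic inverse relation $n L(a_n) a_n^{-\gamma} \to 1$ --- which by Karamata theory uniquely picks out a slowly varying $L_0$ such that $a_n = L_0(n) n^{1/\gamma}$ --- one gets $n(1 - \phi(t/a_n)) \to \Psi_\gamma(t)$, the log characteristic exponent of the target stable law.

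The role of $m_n$ is simply to absorb the first-order term that would otherwise prevent convergence after exponentiating. For $\gamma > 1$ the mean exists and the natural choice $m_n = n \Exp{X}$ removes the diverging imaginary contribution $in\Exp{X}/a_n$. For $\gamma < 1$ the truncated mean $\Exp{X \mathbf{1}_{\{X \le a_n\}}}$ grows slower than $a_n$, so $m_n = 0$ suffices. The delicate case is $\gamma = 1$: the naive centering can diverge, so one replaces the identity by the bounded truncation $x \mapsto a_n \sin(x/a_n)$, which leads precisely to the sine-centering $m_n = n^2 \Exp{\sin(X/(L_0(n) n))}$ appearing in the statement; its asymptotic behavior is extracted from the Tauberian expansion of the sine transform of $\overline{F}$.

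The main obstacle is the Tauberian step itself: converting the regular variation of $\overline{F}$ into the correct small-argument expansion of $\phi$, and tracking the imaginary part carefully enough to identify the skewness of $\mathcal{S}_\gamma$. The $\gamma = 1$ boundary is also technically awkward because one needs uniform control of a slowly varying correction in the sine truncation. Once the expansion of $1 - \phi(u)$ is in hand, however, the inverse choice of $a_n$, the case-by-case selection of $m_n$, and convergence of characteristic functions combine in a standard way to give convergence in distribution to $\mathcal{S}_\gamma$.
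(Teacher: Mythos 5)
The paper does not prove this theorem; it is imported verbatim (as Theorem 4.5.1 of the cited Whitt reference, which in turn rests on the classical Gnedenko--Kolmogorov/Feller argument), and your sketch is precisely that classical characteristic-function proof: Karamata expansion of $1-\phi(u)$, normalization $n L(a_n)a_n^{-\gamma}\to 1$, and case-by-case centering. The outline is correct and matches the source's approach; the only slight imprecision is in the $\gamma<1$ case, where $n\Exp{X\mathbf{1}_{\{X\le a_n\}}}/a_n$ tends to the nonzero constant $\gamma/(1-\gamma)$ rather than to $0$, so taking $m_n=0$ works only because that constant shift is absorbed into the location of $\mathcal{S}_\gamma$.
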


From Theorem~\ref{thm:stable_clt} we can infer several scaling results using the following 
observation: By Slutsky's Theorem it follows that
\[
	\frac{\sum_{i = 1}^n X_i - m_n}{L_0(n) n^{\frac{1}{\gamma}}} \dlim S_{\gamma} \quad
	\text{as } n \to \infty
\]
implies that for any $\varepsilon > 0$,
\[
	n^{-\varepsilon}\frac{\sum_{i = 1}^n X_i - m_n}{L_0(n) n^{\frac{1}{\gamma}}} \plim 0
	\quad \text{as } n \to \infty.
\]
Hence $\left|\sum_{i = 1}^n X_i - m_n\right| = \bigOp{L_0(n) n^{1/\gamma}}$ and therefore, by 
Potter's Theorem, it follows that $\left|\sum_{i = 1}^n X_i - m_n\right| = \bigOp{n^{1/\gamma}}$. 
Finally, we remark that if $D$ has distribution~\eqref{eq:degree_distribution} with $1 < \gamma 
\le 2$, then $D^2$ has distribution~\eqref{eq:degree_distribution} with exponent  $1/2 < \gamma/2 
\le 1$. Summarizing, we have the following. 

\begin{corollary}\label{cor:clt_edges}
Let $G_n$ be a graph generated by the ECM with degree distribution
\eqref{eq:degree_distribution} and $1 < \gamma \le 2$, then
\begin{equation*}
	L_n = \bigOp{n}, \quad
	\left|\sum_{i = 1}^n D_i - \mu n\right| = \bigOp{n^{\frac{1}{\gamma}}} \, \text{ and } \,
	\sum_{i = 1}^n D_i^2 = \bigOp{n^{\frac{2}{\gamma}}}
\end{equation*}
\end{corollary}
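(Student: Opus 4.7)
The plan is to apply Theorem~\ref{thm:stable_clt} twice --- once to the i.i.d.\ sequence $(D_i)$ and once to $(D_i^2)$ --- and in each case promote the convergence in distribution to a probabilistic $O$-bound via the Slutsky/Potter recipe already described in the paragraph preceding the corollary. The second estimate comes out directly, the first is a trivial consequence of the second, and the third follows from the second application.

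For the middle estimate I would apply Theorem~\ref{thm:stable_clt} to $(D_i)$ with index $\gamma \in (1,2]$, so that the centering is $m_n = n\mu$. The CLT gives
\[
	\frac{L_n - n\mu}{L_0(n)\, n^{1/\gamma}} \dlim \mathcal{S}_\gamma,
\]
Slutsky's Theorem then provides $n^{-\varepsilon}(L_n - n\mu)/(L_0(n)\, n^{1/\gamma}) \plim 0$ for every $\varepsilon > 0$, and absorbing the slowly varying factor $L_0(n)$ into an arbitrarily small power of $n$ via Potter's Theorem yields $|L_n - n\mu| = \bigOp{n^{1/\gamma}}$. The first estimate is then an immediate triangle-inequality consequence: $L_n \le |L_n - n\mu| + n\mu = \bigOp{n^{1/\gamma}} + \bigO{n} = \bigOp{n}$, since $1/\gamma < 1$.

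For the third estimate I would note that $\Prob{D^2 > k} = \Prob{D > \sqrt{k}} = L(\sqrt{k})\, k^{-\gamma/2}$, so $(D_i^2)$ is i.i.d.\ and regularly varying of index $\gamma/2 \in (1/2, 1]$. Applying Theorem~\ref{thm:stable_clt} to this sequence, the case $\gamma < 2$ has vanishing centering and the same Slutsky/Potter argument delivers $\sum_{i=1}^n D_i^2 = \bigOp{n^{2/\gamma}}$. The main obstacle I anticipate is the boundary case $\gamma = 2$, where the stable index applied to $D^2$ is exactly $1$ and the centering $m_n = n^2 \Exp{\sin(D^2/(L_0(n)\, n))}$ is not immediately negligible. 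To handle it I would estimate $|m_n|$ by splitting the expectation according to whether $D^2 \le L_0(n)\, n$, using $|\sin x| \le |x|$ on the bulk piece and $|\sin x| \le 1$ on the tail piece, and then invoking the regular variation of $\Prob{D^2 > \cdot}$ to show that $|m_n|$ is bounded by $n^{2/\gamma}$ up to a slowly varying factor, which Potter again absorbs.
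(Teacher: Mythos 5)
Your proposal is correct and follows essentially the same route as the paper: the Stable Law CLT combined with the Slutsky/Potter recipe, applied once to $(D_i)$ and once to $(D_i^2)$ (the latter regularly varying with index $\gamma/2$), with $L_n = \bigOp{n}$ falling out of the centered bound since $1/\gamma < 1$. The only point of divergence is the boundary case $\gamma = 2$ for $\sum_{i=1}^n D_i^2$, where the paper bounds the sine-centering term by $n\mu$ via $\sin x \le x$ while you split the expectation at $D^2 \le L_0(n)\,n$ and invoke regular variation; your version is, if anything, the more careful one, since at index $1$ a truncated-moment estimate is what is genuinely needed.
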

The third equation also holds for $\gamma = 2$ since
\begin{align*}
	\sum_{i = 1}^n D_i^2 &= \left(\sum_{i = 1}^n D_i^2 - L_0(n) n^2 \Exp{\sin\left(
	\frac{D}{n L_0(n)}\right)}\right) + L_0(n) n^2 \Exp{\sin\left(	\frac{D}{n^1 L_0(n)}\right)} \\
	&\le \left(\sum_{i = 1}^n D_i^2 - n^2 L_0(n) \Exp{\sin\left(
	\frac{D}{n^1 L_0(n)}\right)}\right) + n \mu \\
	&= \bigOp{L_0(n) n} + n \mu = \bigOp{n}. 
\end{align*}

\subsection{The upper bounds $\bigOp{n^{\frac{4}{\gamma} - 3}}$ and $\bigOp{n^{-1}}$}

For the proof of the upper bounds we will use the following proposition.

\begin{proposition}[Proposition 7.10~\cite{VanDerHofstad2007}]\label{prop:first_upper_bound}
	Let $G_n$ be a graph generated by the CM and denote by $S_n$ and $M_n$, 
	respectively, the	number of self-loops and multiple edges. Then
	\begin{equation*}
		\Expn{S_n} \le \sum_{i = 1}^n \frac{D_i^2}{L_n} \quad \text{and} \quad
		\Expn{M_n} \le 2\left(\sum_{i = 1}^n \frac{D_i^2}{L_n}\right)^2.
	\end{equation*}
\end{proposition}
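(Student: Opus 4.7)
The plan is to reduce both inequalities to straightforward expected-value computations via linearity, exploiting the fact that in the configuration model the $L_n$ stubs are paired by a uniform random perfect matching, so that any two prescribed distinct stubs end up matched with probability $1/(L_n-1)$.

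For the self-loop bound I would decompose $S_n = \sum_{i=1}^n S_n^{(i)}$, where $S_n^{(i)}$ counts the pairs of stubs at node $i$ that are matched to each other. By linearity of expectation, $\Expn{S_n^{(i)}} = \binom{D_i}{2}/(L_n-1)$. Summing over $i$ and then applying the elementary bounds $\binom{D_i}{2}\le D_i^2/2$ together with $1/(L_n-1) \le 2/L_n$ (valid whenever $L_n\ge 2$) gives the stated inequality for $\Expn{S_n}$.

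For the multiple-edge bound the key deterministic estimate is $\max(E_{ij}-1,0) \le \binom{E_{ij}}{2}$, which yields $M_n \le \sum_{i<j} \binom{E_{ij}}{2}$. I would compute $\Expn{\binom{E_{ij}}{2}}$ by counting configurations in which two specific stubs at $i$ are matched to two specific stubs at $j$ so as to form two parallel edges: there are $\binom{D_i}{2}\binom{D_j}{2}$ choices of stubs and $2$ matchings per choice, and each configuration is realized with probability $1/[(L_n-1)(L_n-3)]$. Hence $\Expn{\binom{E_{ij}}{2}} = 2\binom{D_i}{2}\binom{D_j}{2}/[(L_n-1)(L_n-3)]$. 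Summing, symmetrizing to ordered pairs, replacing each binomial by the corresponding squared degree, and comparing $(L_n-1)(L_n-3)$ with $L_n^2$ then produces a bound of the form $2\bigl(\sum_i D_i^2/L_n\bigr)^2$.

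The main obstacle is not conceptual but bookkeeping: the constant $2$ in the stated inequality is fairly tight, so one has to track carefully the factors of $2$ coming from the binomial upper bounds, the two matching orientations, the extension from $i<j$ to all ordered $(i,j)$, and the slack in replacing $(L_n-1)(L_n-3)$ by $L_n^2$, and verify that their combination is no larger than $2$. A crude application of each estimate would deliver a constant strictly larger than $2$; some care, using that the original statement is an inequality with room to spare, is required to land exactly on the claimed constant.
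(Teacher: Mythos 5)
The paper does not prove this proposition; it is imported verbatim from van der Hofstad's book, and your argument is essentially the standard proof given there: exact computation of $\Expn{S_n}=\sum_i \binom{D_i}{2}/(L_n-1)$ by linearity over stub pairs, and the bound $M_n\le\sum_{i<j}\binom{E_{ij}}{2}$ followed by the second-moment computation $\Expn{\binom{E_{ij}}{2}}=2\binom{D_i}{2}\binom{D_j}{2}/[(L_n-1)(L_n-3)]$. Your proof is correct, and the constant-tracking issue you flag (comparing $(L_n-1)(L_n-3)$ with $L_n^2$) is genuine only for very small $L_n$, where the inequality can be checked directly since no multiple edge can exist unless $L_n\ge 4$.
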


\begin{lemma}\label{lem:first_upper_bound}
	Let $G_n$ be a graph generated by the ECM with degree distribution
	\eqref{eq:degree_distribution}, then
	\begin{equation}
		\frac{1}{L_n} \sum_{i,j = 1}^n \Expn{E_{ij}^c} = 
		\begin{cases}
			\bigOp{n^{\frac{4}{\gamma} - 3}} &\mbox{ if } 1 < \gamma \le 2,\\
			\bigOp{n^{-1}} &\mbox{ if } \gamma > 2.
		\end{cases}
	\label{eq:erased_edges_bound_1}
	\end{equation}
\end{lemma}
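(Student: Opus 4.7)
The plan is to combine Proposition~\ref{prop:first_upper_bound} with Corollary~\ref{cor:clt_edges}, reducing the counting problem to the asymptotics of $\sum_i D_i^2/L_n$.

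First, I would identify $\sum_{i,j=1}^n E_{ij}^c$ with the total number of edges erased by the ECM, which is (up to a fixed factor depending on whether pairs are counted ordered or unordered) bounded above by $C(S_n + M_n)$, where $S_n$ and $M_n$ are the self-loops and excess multiple edges produced by the CM. Proposition~\ref{prop:first_upper_bound} then yields the conditional bound
\[
    \frac{1}{L_n}\sum_{i,j=1}^n \Expn{E_{ij}^c}
    \;\le\; C\left(\frac{\sum_{i=1}^n D_i^2}{L_n^2}
    + \frac{2\bigl(\sum_{i=1}^n D_i^2\bigr)^2}{L_n^3}\right),
\]
which replaces the random object $E_{ij}^c$ by deterministic functionals of the degree sequence.

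Next I would bound each term probabilistically. Since $\gamma>1$ implies $\mu=\Exp{D}<\infty$, the law of large numbers (or Corollary~\ref{cor:clt_edges}) gives $L_n/n \plim \mu$, so Slutsky's theorem lets me replace $1/L_n$ by $\bigOp{n^{-1}}$. For $1<\gamma\le 2$ Corollary~\ref{cor:clt_edges} supplies $\sum_i D_i^2 = \bigOp{n^{2/\gamma}}$, which gives
\[
    \frac{\sum_i D_i^2}{L_n^2} = \bigOp{n^{2/\gamma-2}}, \qquad
    \frac{(\sum_i D_i^2)^2}{L_n^3} = \bigOp{n^{4/\gamma-3}}.
\]
Since $(4/\gamma-3)-(2/\gamma-2) = 2/\gamma - 1 \ge 0$ for $\gamma\le 2$, the second term dominates and the overall bound is $\bigOp{n^{4/\gamma-3}}$. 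For $\gamma>2$ the variance $\Exp{D^2}$ is finite, so the law of large numbers yields $\sum_i D_i^2 = \bigOp{n}$, and both terms become $\bigOp{n^{-1}}$.

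The main obstacle is bookkeeping rather than depth: I need to make sure the division by the \emph{random} normalizer $L_n$ is handled properly inside the $\bigOp{\cdot}$ notation of Definition~\ref{def:probabilistic_bigO}, i.e.\ that the quotient of a $\bigOp{n^\rho}$ term and a sequence bounded below in probability by $cn$ is again $\bigOp{n^{\rho-1}}$. This follows from Slutsky together with the observation that $n/L_n \plim 1/\mu$, but it must be invoked cleanly so that the slowly varying factors absorbed into Potter's bounds do not reintroduce extra $n^\varepsilon$ losses beyond what the definition of $\bigOp{\cdot}$ already permits. Once this is done, the two regimes in~\eqref{eq:erased_edges_bound_1} follow directly.
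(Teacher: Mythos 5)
Your proposal is correct and follows essentially the same route as the paper: identify $\sum_{i,j}E_{ij}^c$ with $S_n+M_n$, apply Proposition~\ref{prop:first_upper_bound}, and then use Corollary~\ref{cor:clt_edges} together with $L_n/n\plim\mu$ to extract the scaling of $\sum_i D_i^2/L_n^2$ and $(\sum_i D_i^2)^2/L_n^3$. The only cosmetic difference is that for $\gamma>2$ you invoke the law of large numbers for $\sum_i D_i^2$ directly, where the paper splits off the centered sum $|\sum_i D_i^2 - n\nu|$ via the stable CLT; both give $\sum_i D_i^2=\bigOp{n}$ and hence the $\bigOp{n^{-1}}$ bound.
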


\begin{proof}
We start by observing that 
\[
	\sum_{i,j = 1}^n E^c_{ij} = S_n + M_n,
\]
and hence it follows from Proposition~\ref{prop:first_upper_bound} that
\[
	\sum_{i,j = 1}^n \Expn{E^c_{ij}} \le \sum_{i = 1}^n \frac{D_i^2}{L_n} 
		+ 2\left(\sum_{i = 1}^n \frac{D_i^2}{L_n}\right)^2.
\]

First suppose that $1 < \gamma \le 2$. Then, by Corollary~\ref{cor:clt_edges} and the continuous
mapping theorem it follows that 
\[
	\frac{1}{L_n} \sum_{i,j = 1}^n \Expn{E_{ij}^c} \le \frac{1}{L_n^2} \sum_{i = 1}^n D_i^2 
		+ 2\frac{1}{L_n^3} \left(\sum_{i = 1}^n D_i^2\right)^2 = \bigOp{n^{\frac{4}{\gamma} - 3}}.
\] 

Now suppose that $\gamma > 2$. Then $D_i^2$ has finite mean, say $\nu$, and therefore, by Theorem
\ref{thm:stable_clt},
\[
	\frac{1}{L_n^2} \sum_{i = 1}^n D_i^2 \le \frac{1}{L_n^2}\left|\sum_{i = 1}^n D_i^2 - n\nu\right|
	+ \frac{n\nu}{L_n^2} = \bigOp{n^{\frac{2}{\gamma} - 2} + n^{-1}} = \bigOp{n^{-1}},
\]
where the last step follows since $2/\gamma - 2 < -1$ when $\gamma > 2$. Since this is the main
term the result follows.
\qed 
\end{proof}

Lemma~\ref{lem:first_upper_bound} provides the last two upper bounds from Theorem
\ref{thm:main_result}. However, as we mentioned before, the bound $\bigOp{n^{4/\gamma - 3}}$ is not
tight over the whole range $1< \gamma\le 2$ since for $1 < \gamma < 4/3$ we have 
$4/\gamma - 3 > 0$, and hence the upper bound diverges as $n \to \infty$ which is in disagreement 
with~\eqref{eq:erased_edges_convergence}. Therefore, there must exist another upper bound on the 
average erased number of edges, which goes to zero as $n \to \infty$ for all $\gamma > 1$. 
This new bound does not follow readily from the literature. Below we establish such upper bound and
explain the essential new ingredients needed for its proof.

\subsection{The upper bound $\bigOp{n^{\frac{1}{\gamma} - 1}}$}

We first observe that the number of erased edges between nodes $i$ and $j$ equals the total number 
of edges between the nodes minus one, if there is more than one edge. This gives,
\begin{align*}
	\frac{1}{L_n} \sum_{i,j = 1}^n \Expn{E^c_{ij}} 
	&= \frac{1}{L_n} \sum_{i,j = 1}^n \Expn{E_{ij} - \mathbbm{1}_{\{E_{ij} > 0\}}} \\
	&= \frac{1}{L_n} \sum_{i,j = 1}^n \Expn{E_{ij}} - \frac{1}{L_n} \sum_{i,j = 1}^n 
		\Expn{1 - \mathbbm{1}_{\{E_{ij} = 0\}}} \\
	&= 1 - \frac{n^2}{L_n} + \frac{1}{L_n} \sum_{i,j = 1}^n \Probn{E_{ij} = 0}. 
	\numberthis \label{eq:removed_edges_alternative}
\end{align*}

We can get an upper bound for $\Probn{E_{ij} = 0}$ from the analysis performed in 
\cite{Hofstad2005}, Section 4. Since the probability of no edges between $i$ and $j$ equals the
probability that none of the $D_i$ stubs connects to one of the $D_j$ stubs, it follows from 
equation (4.9) in~\cite{Hofstad2005} that
\begin{equation}
	\Probn{E_{ij} = 0} \le \prod_{k = 0}^{D_i - 1}\left(1 - \frac{D_j}{L_n - 2D_i - 1}\right) 
	+ \frac{D_i^2 D_j}{(L_n - 2D_i)^2}.
	\label{eq:no_edge_bound}
\end{equation} 
The product term in~\eqref{eq:no_edge_bound} can be upper bounded by $\exp\{-D_iD_j/E_n\}$. For the
second term we use that
\begin{align*}
	\frac{1}{L_n}\sum_{i,j = 1}^n \frac{D_i^2 D_j}{(L_n - 2D_i)^2} 
	&= \frac{1}{L_n^2}\sum_{i = 1}^n D_i^2\left(\frac{1}{1-\frac{2D_i}{L_n}}\right)^2
		\left(\frac{1}{L_n}\sum_{j = 1}^n D_j\right) \\
	&\le \frac{1}{L_n^2} \sum_{i = 1}^n D_i^2 = \bigOp{n^{\frac{2}{\gamma} - 2}}.
\end{align*}
Putting everything together we obtain
\begin{equation}
		\frac{1}{L_n}\sum_{i,j = 1}^n\Probn{E_{ij} = 0} \le \sum_{i,j = 1}^n 
		\exp\left\{-\frac{D_i^+ D_j^-}{L_n}\right\} + \bigOp{n^{\frac{2}{\gamma} - 2}}.
		\label{eq:no_edge_exponential_bound}
\end{equation}

We will use~\eqref{eq:no_edge_exponential_bound} to upper bound
\eqref{eq:removed_edges_alternative}. In order to obtain the desired result we will employ a 
Tauberian Theorem for regularly varying random variables, which we summarize first. We write 
$a\sim b$ to denote that $a/b$ goes to one in a corresponding limit.

\pagebreak

\begin{theorem}[Tauberian Theorem, \cite{Bingham1974}]\label{thm:tauberian_theorem}
	Let $X$ be a non-negative random variable with only finite mean. Then, for $1 < \gamma < 2$, the
	following are equivalent,
	\begin{enumerate}
		\item[i)] $\displaystyle \Prob{X > t} \sim L(t) t^{-\gamma} \quad \text{as } t \to \infty$,
		\item[ii)] $\displaystyle \frac{\Exp{X}}{t} - 1 + \exp\left\{-\frac{X}{t}\right\} \sim 
			L\left(\frac{1}{t}\right) t^{-\gamma} \quad \text{as } t \to \infty$.
	\end{enumerate}
\end{theorem}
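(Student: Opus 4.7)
The plan is to derive the equivalence from Karamata's classical Tauberian theorem for Laplace--Stieltjes transforms, after rewriting the quantity in (ii) as a Laplace integral against the tail of $X$. Writing $s = 1/t$, apply Fubini to $\Exp{X} = \int_0^\infty \Prob{X > u}\,du$ and to $1 - \Exp{e^{-sX}} = s\int_0^\infty e^{-su}\Prob{X > u}\,du$ and subtract to obtain the identity
\[
\frac{\Exp{X}}{t} - 1 + \Exp{e^{-X/t}}
= \frac{1}{t}\int_0^\infty \bigl(1 - e^{-u/t}\bigr)\Prob{X > u}\,du.
\]
Thus (ii) asserts the regular variation of a specific linear functional of the monotone function $u \mapsto \Prob{X > u}$.

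For the direction (i)$\Rightarrow$(ii), I would change variables $v = u/t$ to obtain $\int_0^\infty (1 - e^{-v})\Prob{X > tv}\,dv$, insert the assumption $\Prob{X > tv} \sim L(tv)(tv)^{-\gamma}$, and use Potter's bounds together with dominated convergence to factor out $L(t)\,t^{-\gamma}$. The residual integral $\int_0^\infty (1 - e^{-v})v^{-\gamma}\,dv$ converges to the finite positive constant $\Gamma(2-\gamma)/(\gamma-1)$ precisely because $1 < \gamma < 2$: the upper restriction $\gamma < 2$ supplies integrability at $0$ (since $1 - e^{-v} \sim v$), and the lower restriction $\gamma > 1$ supplies integrability at $\infty$.

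For the Tauberian direction (ii)$\Rightarrow$(i), I would apply Karamata's Tauberian theorem to the doubly-integrated primitive $U(x) = \int_0^x \bigl(\Exp{X} - \int_0^y \Prob{X > w}\,dw\bigr)\,dy$, whose Laplace--Stieltjes transform equals the expression in (ii) up to a factor of $s^2 = 1/t^2$, reducing the problem to regular variation of index $2 - \gamma \in (0,1)$ where the classical Tauberian theorem yields $U(x) \sim c\,x^{2-\gamma}L(x)$. Differentiating twice, justified at each step by monotonicity of $\Prob{X > \cdot}$ via the monotone density theorem for regularly varying functions, then recovers $\Prob{X > t} \sim L(t) t^{-\gamma}$.

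The main obstacle is precisely this Tauberian step: identifying the correct monotone primitive and propagating the monotone density theorem through two levels of integration requires careful bookkeeping of the Gamma-constants and of the slowly varying factor, and is the reason one has to invoke the machinery of Bingham--Goldie--Teugels rather than a one-line application of the classical Karamata theorem. The Abelian direction, by contrast, is essentially an interchange of limit and integral that is routine once the Laplace-integral representation is in hand.
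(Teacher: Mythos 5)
The paper does not prove this theorem; it is imported with a citation to \cite{Bingham1974} and used as a black box, so there is no in-paper argument to compare yours against. Your proof is correct and is essentially the classical Karamata route one would find in the cited literature. The Laplace-integral identity
\[
	\frac{\Exp{X}}{t} - 1 + \Exp{e^{-X/t}}
	= \frac{1}{t}\int_0^\infty\left(1 - e^{-u/t}\right)\Prob{X>u}\,du
\]
is right; the Abelian direction with the constant $\int_0^\infty(1-e^{-v})v^{-\gamma}\,dv = \Gamma(2-\gamma)/(\gamma-1)$ checks out (and correctly locates where both endpoints of $1<\gamma<2$ are used); and the Tauberian direction via Karamata's theorem applied to $U(x)=\int_0^x\int_y^\infty\Prob{X>w}\,dw\,dy$, whose Laplace--Stieltjes transform is $s^{-2}$ times the quantity in ii), followed by two passes of the monotone density theorem, recovers i) with the reciprocal constant $(\gamma-1)/\Gamma(2-\gamma)$. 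Three small remarks. First, your argument silently repairs two typos in the statement as printed: ii) should read $\Exp{\exp\{-X/t\}}$ rather than the random quantity $\exp\{-X/t\}$, and the slowly varying factor should be $L(t)$ rather than $L(1/t)$ (an artifact of the $s=1/t$ parametrization). Second, as your computation makes explicit, i) and ii) are equivalent only up to the multiplicative constant $\Gamma(2-\gamma)/(\gamma-1)$, which the statement suppresses; this is harmless for the application in Proposition~\ref{prop:second_upper_bound}, which only uses the $t^{-\gamma}L(t)$ scaling. Third, a point of precision: the first application of the monotone density theorem is justified by the monotonicity of $y\mapsto\int_y^\infty\Prob{X>w}\,dw$, which follows from non-negativity of the tail; monotonicity of $\Prob{X>\cdot}$ itself is what you need only for the second application.
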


We will first explain the idea behind the proof of the $\bigOp{n^{1/\gamma - 1}}$ bound. If we 
insert~\eqref{eq:no_edge_exponential_bound} into~\eqref{eq:removed_edges_alternative} we get
\begin{equation}
	\frac{1}{L_n}\sum_{i,j = 1}^n \Expn{E_{ij}^c} \le 1 - \frac{n^2}{L_n} 
		+ \frac{1}{L_n}\sum_{i,j = 1}^n\exp\left\{-\frac{D_i^+ D_j^-}{L_n}\right\} 
		+ \bigOp{n^{\frac{2}{\gamma} - 2}}.
	\label{eq:erased_edges_exponential_bound}
\end{equation}
The terms on the right side can be rewritten to obtian an expression that resembles an empirical
version of the left hand side of part ii) from Theorem~\ref{thm:tauberian_theorem}, with $t = L_n$
and $X = D_1 D_2$. Thus, the scaling of the average number of erased edges will be determined by 
the scaling that follows from the Tauberian Theorem and the Stable Law CLT. 

\begin{proposition}\label{prop:second_upper_bound}
	Let $G_n$ be a graph generated by the ECM with degree distribution
	\eqref{eq:degree_distribution} and $1 < \gamma < 2$. Then
	\begin{equation}
		\frac{1}{L_n} \sum_{i,j = 1}^n \Expn{E_{ij}^c} = \bigOp{n^{\frac{1}{\gamma} - 1}}.
	\label{eq:erased_edges_bound_2}
	\end{equation}
\end{proposition}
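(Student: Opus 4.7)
The plan is to make the sketch given right before the statement rigorous. First, use the identity $\sum_{i,j}D_iD_j=L_n^2$ to consolidate the leading terms on the right-hand side of \eqref{eq:erased_edges_exponential_bound} into
\[
\frac{1}{L_n}\sum_{i,j=1}^n h\!\left(\frac{D_iD_j}{L_n}\right)+\bigOp{n^{\frac{2}{\gamma}-2}},
\qquad h(u):=e^{-u}-1+u\ge 0,
\]
so that each summand is exactly the integrand on the left-hand side of Theorem~\ref{thm:tauberian_theorem}(ii), evaluated at $t=L_n$ and $X=D_iD_j$.

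Second, I would replace the random denominator $L_n$ by the deterministic value $\mu n$. Since $h$ is $1$-Lipschitz on $[0,\infty)$ (because $h'(u)=1-e^{-u}\in[0,1]$) and $h(u)\le u$, the difference between the $L_n$-version and the $\mu n$-version of the sum is bounded by
\[
L_n\left|\frac{1}{L_n}-\frac{1}{\mu n}\right|+\frac{L_n^2}{\mu n}\left|\frac{1}{L_n}-\frac{1}{\mu n}\right|,
\]
using $\sum h(D_iD_j/L_n)\le\sum D_iD_j/L_n=L_n$ and $\sum D_iD_j=L_n^2$. Since the Stable Law CLT (Theorem~\ref{thm:stable_clt}) gives $|L_n-\mu n|=\bigOp{n^{1/\gamma}}$ and Corollary~\ref{cor:clt_edges} gives $L_n=\bigOp{n}$, both summands above are $\bigOp{n^{1/\gamma-1}}$.

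Third, I would bound the deterministic-denominator quantity $(\mu n)^{-1}\sum_{i,j}h(D_iD_j/(\mu n))$ by its expectation via Markov's inequality. The product $D_1D_2$ of two i.i.d.\ samples with tail $L(k)k^{-\gamma}$ is itself regularly varying with exponent $\gamma$ (the slowly varying factor picks up at most a logarithmic correction), so Theorem~\ref{thm:tauberian_theorem} applied at $t=\mu n$ gives $\Exp{h(D_1D_2/(\mu n))}=\bigO{n^{-\gamma}}$ up to slowly varying factors. Summing over the $n^2$ pairs, dividing by $\mu n$, and applying Markov's inequality yields $\bigOp{n^{1-\gamma}}$. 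Because $\gamma+1/\gamma>2$ for $\gamma\in(1,2)$ by AM--GM, we have $1-\gamma<1/\gamma-1$, so this contribution is dominated by the replacement error of step two, producing the claimed $\bigOp{n^{1/\gamma-1}}$.

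The main obstacle I anticipate is verifying that $D_1D_2$ is regularly varying with the same exponent $\gamma$ as $D$, and carefully tracking the slowly varying factors so that they are all absorbed by the $\bigOp{\cdot}$ notation (Definition~\ref{def:probabilistic_bigO}). A secondary point is that the coarse Lipschitz bound $|h(u)-h(v)|\le|u-v|$ loses sharpness for large $u$, but the algebra in step two still works because the dominant scale $n^{1/\gamma-1}$ already swamps the sharper Tauberian estimate from step three.
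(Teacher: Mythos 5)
Your proposal is correct and follows essentially the same route as the paper: rewrite the bound \eqref{eq:erased_edges_exponential_bound} in the form of Theorem~\ref{thm:tauberian_theorem}(ii), replace the random denominator $L_n$ by $\mu n$ at a cost of $\bigOp{n^{1/\gamma-1}}$ controlled by Corollary~\ref{cor:clt_edges}, and handle the deterministic-denominator term by Markov's inequality together with the Tauberian Theorem applied to the regularly varying product $D_1D_2$, yielding a dominated $\bigOp{n^{1-\gamma}}$ contribution. The only cosmetic difference is that you consolidate everything into the single function $h(u)=e^{-u}-1+u$ and use its Lipschitz property once, where the paper splits the replacement error into the separate linear term \eqref{eq:degree_estimation_linear} and exponential term \eqref{eq:degree_estimation_exponential}.
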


\begin{proof}
	We start with equation~\eqref{eq:erased_edges_exponential_bound}. Since the correction term here
	is of lower order, by extracting a factor $n^2/L_n$ from the other terms and using that $L_n = 
	\sum_{i = 1}^n D_i$, it suffices to show that
	\begin{equation} 
		\frac{n^2}{L_n}\left(\frac{1}{n^2}\sum_{i,j = 1}^n \frac{D_i D_j}{L_n} - 1
		+ \frac{1}{n^2} \sum_{i,j = 1}^n \exp\left\{-\frac{D_i D_j}{L_n}\right\}\right) 
		= \bigOp{n^{\frac{1}{\gamma} - 1}}.
		\label{eq:removed_edges_main_term}
	\end{equation}
	We first consider the term inside the brackets in the left hand side of
	\eqref{eq:removed_edges_main_term}.
	\begin{align}
		&\left|\frac{1}{n^2} \sum_{i,j = 1}^n \frac{D_i D_j}{L_n} - 1 + \frac{1}
			{n^2} \sum_{i,j = 1}^n \exp\left\{-\frac{D_i D_j}{L_n}\right\}\right| \notag \\
		&\le \frac{1}{n^2}\left|\frac{1}{L_n} - \frac{1}{\mu n}\right|\sum_{i,j = 1}^n D_i D_j 
			\label{eq:degree_estimation_linear}\\
		&\hspace{10pt}+ \frac{1}{n^2} \sum_{i,j = 1}^n\left|\exp\left\{-\frac{D_i D_j}{L_n}
			\right\} - \exp\left\{-\frac{D_i D_j}{\mu n}\right\}\right| 
			\label{eq:degree_estimation_exponential}\\
		&\hspace{10pt}+ \left|\frac{1}{n^2} \sum_{i,j = 1}^n \left(\frac{D_i D_j}{\mu n} - 1 + 
			\exp\left\{-\frac{D_i D_j}{\mu n}\right\}\right)\right| \label{eq:tauberian_estimation}
	\end{align}
	Since
	\[
		\frac{1}{n^2} \sum_{i,j = 1}^n\left|\exp\left\{-\frac{D_i D_j}{L_n}
			\right\} - \exp\left\{-\frac{D_i D_j}{\mu n}\right\}\right|
		\le \frac{1}{n^2}\left|\frac{1}{L_n} - \frac{1}{\mu n}\right| \sum_{i,j = 1}^n D_i D_j,
	\]
	it follows from Corollary~\ref{cor:clt_edges} that both~\eqref{eq:degree_estimation_linear} and
	\eqref{eq:degree_estimation_exponential} are $\bigOp{n^{\frac{1}{\gamma} - 2}}$. Next, observe 
	that the function $e^{-x} - 1 + x$ is positive which implies, by Markov's inequality, that 
	\eqref{eq:tauberian_estimation} scales as its average
	\begin{equation}
		\frac{\Exp{D_1 D_2}}{\mu n} - 1 + \Exp{\exp\left\{-\frac{D_1 D_2}{\mu n}\right\}}.
		\label{eq:tauberian_term}
	\end{equation}
	where $D_1$ and $D_2$ are two independent random variables with distribution
	\eqref{eq:degree_distribution} and $1 < \gamma < 2$, so that the product $D_1 D_2$ again has 
	distribution~\eqref{eq:degree_distribution} with the same exponent, see for instance the 
	Corollary after Theorem 3 in~\cite{Embrechts1980}. Now we use Theorem~\ref{thm:tauberian_theorem}
	to find that~\eqref{eq:tauberian_term}, and hence~\eqref{eq:tauberian_estimation} are 
	$\bigOp{n^{-\gamma}}$. Finally, the term outside of the brackets in
	\eqref{eq:removed_edges_main_term} is $\bigOp{n}$ and since $1 - \gamma < \frac{1}{\gamma} - 1$
	for all $\gamma > 1$, the result follows.
	\qed
\end{proof}

\section{Discussion}

The configuration model is one of the most important random graph models developed so far for 
constructing test graphs, used in the study of structural properties of, and processes
on, real-world networks. The model is of course most true to reality when it produces a simple 
graph. Because this will happen with vanishing probability for most networks, since these have 
infinite degree variance, the ECM can be seen as the primary model for a neutrally wired 
simple graph with scale-free degrees. The fact that the fraction of erased edges is vanishing, 
suffices for obtaining asymptotic structural properties and asymptotic behavior of network 
processes in the ECM. However, real-world networks are finite, albeit very large. Therefore, it is
important to understand and quantify how the properties and processes in a finite network are 
affected by the fact that the graph is simple. 

This paper presents the first step in this direction by providing probabilistic upper bounds for 
the number of the erased edges in the undirected ECM. This second order analysis shows that the
average number of erased edges by the ECM decays as $n^{-1}$ when the  variance of the degrees is 
finite. Since the ECM is computationally less expensive then the RCM and other sequential 
algorithms, this is a strong argument for using the ECM as a standard model for generating test 
graphs with given degree distribution. Especially since, in contrast to Markov-Chain Monte Carlo 
methods using edge swap mechanics, it is theoretically well analyzed. We also uncover a new 
transition in the scaling of the average number of erased edges for regularly varying degree 
distributions with only finite mean, in terms of the exponent of the degree distribution.

Based on the empirical results found by us in~\cite{Hoorn2015}, we conjecture that the bounds 
we obtained are tight, up to some slowly varying functions. Therefore, as a next step one could try
to prove Central Limit Theorems for the number of erased edges, using the bounds from Theorem
\ref{thm:main_result} as the correct scaling factors. These tools would make it possible to perform 
statistical analysis of properties on networks, using the ECM as a model for generating test 
graphs.

\bibliographystyle{splncs03}
\bibliography{bib/erasededgescm}

\begin{thebibliography}{10}
\providecommand{\url}[1]{\texttt{#1}}
\providecommand{\urlprefix}{URL }

\bibitem{Andersson1998}
Andersson, H.: Limit theorems for a random graph epidemic model. Annals of
  Applied Probability pp. 1331--1349 (1998)

\bibitem{Artzy-Randrup2005}
Artzy-Randrup, Y., Stone, L.: Generating uniformly distributed random networks.
  Physical Review E  72(5),  056708 (2005)

\bibitem{Bingham1974}
Bingham, N., Doney, R.: Asymptotic properties of supercritical branching
  processes i: The galton-watson process. Advances in Applied Probability pp.
  711--731 (1974)

\bibitem{Blitzstein2011sequential}
Blitzstein, J., Diaconis, P.: A sequential importance sampling algorithm for
  generating random graphs with prescribed degrees. Internet Mathematics  6(4),
   489--522 (2011)

\bibitem{Bollobas1980}
Bollob{\'a}s, B.: A probabilistic proof of an asymptotic formula for the number
  of labelled regular graphs. European Journal of Combinatorics  1(4),
  311--316 (1980),
  \url{http://www.sciencedirect.com/science/article/pii/S0195669880800308}

\bibitem{Britton2006}
Britton, T., Deijfen, M., Martin-L{\"o}f, A.: Generating simple random graphs
  with prescribed degree distribution. Journal of Statistical Physics  124(6),
  1377--1397 (2006)

\bibitem{Cooper2007}
Cooper, C., Dyer, M., Greenhill, C.: Sampling regular graphs and a peer-to-peer
  network. Combinatorics, Probability and Computing  16(04),  557--593 (2007)

\bibitem{DelGenio2010}
Del~Genio, C.I., Kim, H., Toroczkai, Z., Bassler, K.E.: Efficient and exact
  sampling of simple graphs with given arbitrary degree sequence. PloS one
  5(4),  e10012 (2010)

\bibitem{Embrechts1980}
Embrechts, P., Goldie, C.M.: On closure and factorization properties of
  subexponential and related distributions. Journal of the Australian
  Mathematical Society (Series A)  29(02),  243--256 (1980)

\bibitem{Esker2005}
van~den Esker, H., van~der Hofstad, R., Hooghiemstra, G., Znamenski, D.:
  Distances in random graphs with infinite mean degrees. Extremes  8(3),
  111--141 (2005)

\bibitem{Ferreira2012}
Ferreira, S.C., Castellano, C., Pastor-Satorras, R.: Epidemic thresholds of the
  susceptible-infected-susceptible model on networks: A comparison of numerical
  and theoretical results. Physical Review E  86(4),  041125 (2012)

\bibitem{VanDerHofstad2007}
van~der Hofstad, R.: Random graphs and complex networks. Unpublished manuscript
   (2007), \url{http://www.win.tue.nl/~rhofstad/NotesRGCN.pdf}

\bibitem{Hofstad2005}
van~der Hofstad, R., Hooghiemstra, G., Van~Mieghem, P.: Distances in random
  graphs with finite variance degrees. Random Structures \& Algorithms  27(1),
  76--123 (2005)

\bibitem{Hofstad2005a}
van~der Hofstad, R., Hooghiemstra, G., Znamenski, D.: Distances in random
  graphs with finite mean and infinite variance degrees. Eurandom (2005)

\bibitem{Hoorn2014}
van~der Hoorn, P., Litvak, N.: Convergence of rank based degree-degree
  correlations in random directed networks. Moscow Journal of Combinatorics and
  Number Theory  4(4),  45--83 (2014),
  \url{http://mjcnt.phystech.edu/en/article.php?id=92}

\bibitem{Hoorn2015}
van~der Hoorn, P., Litvak, N.: Phase transitions for scaling of structural
  correlations in directed networks. arXiv preprint arXiv:1504.01535  (2015)

\bibitem{Lee2013}
Lee, H.K., Shim, P.S., Noh, J.D.: Epidemic threshold of the
  susceptible-infected-susceptible model on complex networks. Physical Review E
   87(6),  062812 (2013)

\bibitem{Maslov2002}
Maslov, S., Sneppen, K.: Specificity and stability in topology of protein
  networks. Science  296(5569),  910--913 (2002)

\bibitem{Molloy1995}
Molloy, M., Reed, B.: A critical point for random graphs with a given degree
  sequence. Random structures \& algorithms  6(2-3),  161--180 (1995),
  \url{http://onlinelibrary.wiley.com/doi/10.1002/rsa.3240060204/full}

\bibitem{Molloy1998}
Molloy, M., Reed, B.: The size of the giant component of a random graph with a
  given degree sequence. Combinatorics, probability and computing  7(03),
  295--305 (1998)

\bibitem{Newman2001}
Newman, M.E., Strogatz, S.H., Watts, D.J.: Random graphs with arbitrary degree
  distributions and their applications. Physical Review E  64(2),  026118
  (2001), \url{http://journals.aps.org/pre/abstract/10.1103/PhysRevE.64.026118}

\bibitem{Schlauch2015}
Schlauch, W.E., Horv{\'a}t, E.{\'A}., Zweig, K.A.: Different flavors of
  randomness: comparing random graph models with fixed degree sequences. Social
  Network Analysis and Mining  5(1),  1--14 (2015)

\bibitem{Tabourier2011}
Tabourier, L., Roth, C., Cointet, J.P.: Generating constrained random graphs
  using multiple edge switches. Journal of Experimental Algorithmics (JEA)  16,
   1--7 (2011)

\bibitem{Whitt2002}
Whitt, W.: Stochastic-process limits: an introduction to stochastic-process
  limits and their application to queues. Springer (2002)

\end{thebibliography}

\end{document}